\documentclass[preprint,,authoryear,11pt,a4paper]{elsarticle}

%% packages
\usepackage{amsthm,amssymb,amsfonts,amsmath}
%,times1}
%\usepackage{ecrc}

\usepackage{graphicx,bbm,color}
\usepackage{hyperref}
%\usepackage{mathrsfs}
%\usepackage{pst-all}

%dimensions
\setlength{\topmargin}{-.65cm} \setlength{\oddsidemargin}{0.2cm}
\setlength{\evensidemargin}{0.2cm} \setlength{\textwidth}{6.0in}
\setlength{\textheight}{8.9in}

%\DeclareGraphicsRule{.png}{eps}{}{} %uncomment for dvi-viewing; comment for pdf-viewing

%\volume{00} \firstpage{1} \journalname{Statistics an Probability Letters} \runauth{ N.~Ratanov}
%\jnltitlelogo{stochastic processes and their applications}

%\usepackage[colorlinks,citecolor=blue,urlcolor=blue]{hyperref}
%\usepackage{hyperref}
%\DeclareGraphicsExtensions{.eps}
%\DeclareGraphicsRule{.png}{}{}{} %uncomment for dvi-viewing; comment for pdf-viewing
%%%%%%%%%%%%%macros%%%%%%%%%%%%%%%%%%%%%%%%%%

%\newcommand\bibname{References}
%\singlespacing
%%%%%%%%%%%%%macros%%%%%%%%%%%%%%%%%%%%%%%%%%
\newtheorem{theo}{Theorem}[section]
\newtheorem{prop}{Proposition}[section]
\newtheorem{rem}{Remark}[section]

\newtheorem{cor}{Corollary}[section]

\newcommand{\rme}{{\rm e}}
\newcommand{\rmd}{{\rm d}}
\newcommand{\la}{{\lambda}}
\newcommand{\ep}{{\varepsilon}}
\newcommand{\1}{{\bf 1}}

\newcommand{\F}{{\mathcal F}}
\newcommand{\PP}{{\mathbb P}}
\newcommand{\QQ}{{\mathbb Q}}
\newcommand{\RR}{{\mathbb R}}
\newcommand{\ZZ}{{\mathbb Z}}
\newcommand{\EE}{{\mathbb E}}

%%%%%%%%%%%%%%%%%%%%%end macros%%%%%%%%%%%%%%%%%%%%%%%%
%%%%%%%%%%%%%%%%%%%%%%%%%%%%%%%%%%%%%%%%%%%%%%%%%%%%%%%%%%
\begin{document}

\begin{frontmatter}
\title{Damped jump-telegraph processes}
\author{Nikita Ratanov}%\corref{cor1}}
\ead{nratanov@urosario.edu.co}

\address{Universidad del Rosario, Cl.\,12c, No.~4-69, Bogot\'a, Colombia}
%Faculty of Economics, Rosario University,

%\cortext[cor1]{Corresponding author.}

% Article starts here

\begin{abstract} 
We study a one-dimensional  Markov modulated random walk with jumps.
It is assumed that  amplitudes of jumps 
as well as a chosen velocity regime are random and
depend on a  time spent by the process at a previous state of the underlying Markov process.

Equations for the distribution and equations for its moments  are derived. 
We characterise the martingale distributions in terms of  
observable proportions between 
jump and velocity regimes.
  \end{abstract} 

\begin{keyword} inhomogeneous  jump-telegraph process, Volterra equation, martingale measure

\medskip
\MSC{primary 60J27; secondary 60J75, 60K99}
%60J27       Markov chains with continuous parameter
%60J65       Brownian motion
%60J75       Jump processes
%60F05       Central limit and other weak theorems
%60Kxx       Special processes
%60K99       None of the above, but in this section
\end{keyword}

\end{frontmatter}

\section{Introduction}
Telegraph processes with different switchings and velocity regimes 
are studied recently in connection with possibility of different applications
such as, for instance, queuing theory (see   \citet{Zacks1}, \citet{Zacks2}) and
mathematical biology (see \citet{Hadeler-rev}). 
Special attention is devoted to financial applications (see \citet{R2007}, \citet{ON2012}).
In the latter case, an  arbitrage reasoning demands the presence of jumps.

The motions with deterministic jumps are studied in detail, see 
the formal expressions of the transition densities  
in  \citet{R2007},  \citet{DiC2013}. 
Such a model is developed  for the option pricing problem, which is based
 on the risk-neutral approach, see  \citet{R2007}.
If the jump amplitudes are random,  the case is less known.
The telegraph processes of this type are studied 
earlier only under the assumption of mutual independence 
of jump values and jump amplitudes,  
see \citet{Zacks2} and \citet{DiC2013}. Similar setting were used
for the purposes of financial applications, \citet{ON2012}.

We present here  a jump-telegraph process   
when an amplitude of the next jump depends on 
%with alternated jump amplitudes which depend on 
the (random) time spent by the process at the previous state.  
This approach is of special interest for the economical and 
the financial applications, everywhere when the comportment 
of process relates with friction and memory.

Assume that the particle  moves with random
(and \emph{variable})  velocities  performing jumps 
of random amplitude whenever the velocity is changed. 
More precisely,  the actual velocity regime and the amplitude of the next  jump
are defined as (alternated)
 functions of the  time spent by the particle at the previous state.
We assume also that the time intervals between the subsequent state changes 
have sufficiently arbitrary alternated distributions. 
It creates an effect of damping process where a friction is generated by means of memory.
%Damped telegraph process

This setting generalises processes which were used before for market modelling by
\cite{R2007} and \cite{ON2012}.

The underlying processes are described in Sections \ref{sec:2}-\ref{sec:2-2}.
Section \ref{sec:mart} presents the result which can be interpreted as a Doob-Meyer decomposition.
Several examples with different  regimes of velocities and of jumps are presented.

%%%%%%%%%%%%%%%%%%%%%%%%%%%%%%%%%%%%%%%%
\section{Generalised jump-telegraph processes: distribution}\label{sec:2}
\setcounter{equation}{0}
Let  $(\Omega, \mathcal{F}, \PP)$ be a probability space.
Consider two continuous-time Markov processes $\ep_0(t), \ep_1(t)\in\{0, 1\},\; t\in(-\infty, \infty)$.
The subscript $i\in\{0, 1\}$ indicates the initial state, $\ep_i(0)=i$ (with probability 1).
Assume that $\ep_i=\ep_i(t),\; t\in(-\infty, \infty)$ are left-continuos a. s.

Let $\{\tau_n\}_{ n\in\ZZ}$ be a Markov flow of switching 
times. The increments $T_n:=\tau_n-\tau_{n-1}, n\in\ZZ$ are  independent 
and possess alternated distributions   
(with the distribution  functions $F_0$, $F_1$, 
the survival functions $\bar F_0,\; \bar F_1$ and 
the densities $f_0,\; f_1$).  We assume that 
$\tau_0=0$, i. e. the state process $\ep_i$ is started at the switching instant.
The distributions of $\tau_n$ and $T_n$  depend on the initial state $i,\;i\in\{0, 1\}$.
For brevity, we will not always indicate this dependence. 

Consider a particle moving on $\RR$ with two 
alternated velocity regimes $c_0$ and $c_1$.  
These velocities are described by  two   continuous   functions
$c_i=c_i(T, t),\;  T, t>0,\; i=0,1$. 
At each instant $\tau_n$
the particle takes the velocity regime $c_{\ep_i(\tau_{n})}(T_{n}, \cdot)$, where
$T_{n}$ is the (random) time spent by the particle at the previous state. 
%before the last switching instant.
 We define 
a pair of the (generalised) telegraph processes 
 $\mathcal T_i,\; i=0, 1$ 
driven by variable  velocities $c_0,\; c_1$ as follows,
\begin{equation}\label{def:tp}\begin{aligned}
\mathcal T_0(t)=\mathcal T_0(t; c_0, c_1)=%c_0(t)\1_{\{t<\tau_1\}}+
&\sum_{n=0}^{\infty}c_{\ep_0(\tau_{n})}(T_{n}, t-\tau_{n})\1_{\{\tau_{n}<t\leq\tau_{n+1}\}},\\
\mathcal T_1(t)=\mathcal T_1(t; c_0, c_1)=%c_1(t)\1_{\{t<\tau_1\}}+
&\sum_{n=0}^{\infty}c_{\ep_1(\tau_{n})}(T_{n}, t-\tau_{n})\1_{\{\tau_{n}<t\leq\tau_{n+1}\}},
\end{aligned}\quad t\geq0.\end{equation}
The integral $\int_0^t\mathcal T_i(s)\rmd s,\; i=0, 1$  is named the \emph{integrated telegraph process}.

%$\mathcal T_0(0; c_0, c_1)=0$.
%Here $c_0(t)$ and $c_1(t)$ are the velocity regimes chosen initially.
%Here $T_{-1}$ denote a random time spent by the particle before the starting time 
Let $N=N_i(t):=\max\{n\geq0~:~\tau_n\leq t\},\; t\geq0$ be a counting process.
 Notice that, $N_i(0)=0$ and 
$\ep_0(t)=(1-(-1)^{N_0(t)})/2$ and $\ep_1(t)=(1+(-1)^{N_1(t)})/2$.

The integrated telegraph process can be 
interpreted as the sum of random number of random variables.
If $N_i(t)=0$, then 
\begin{equation}
\label{def:itp0}
\int\limits_0^t\mathcal T_i(s)\rmd s=l_i(T_0;  t);
%\int\limits_0^tc_i(T_{0}, s)\rmd s,
\end{equation}
if  $N_i(t)>0$, then the integrated telegraph process is expressed as
\begin{equation}
\label{def:itp}
\int\limits_0^t\mathcal T_i(s)\rmd s=\sum_{n=0}^{N_i(t)-1}l_{\ep_i(\tau_n)}(T_n; \tau_n, \tau_{n+1})
+l_{\ep_i(\tau_{N_i(t)})}(T_{N_i(t)}; \tau_{N_i(t)}, t).
%\int\limits_{\tau_{n}}^{\tau_{n+1}}c_{\ep_i(\tau_{n})}(T_{n}, s-\tau_{n})\rmd s
%+\int\limits_{\tau_{N_i(t)}}^tc_{\ep_i(\tau_{N_i(t)})}(T_{N_i(t)}, s-\tau_{N_i(t)}))\rmd s.
\end{equation}
Here 
\begin{equation*}
\label{def:li}
l_i(T; u, t):=\int_u^tc_i(T, s)\rmd s,\quad i=0, 1.
\end{equation*}
Notice that
$
l_i(T; u, s)+l_i(T; s, t)\equiv l_i(T; u, t),\;  i=0, 1.
$
Simplifying  we  write $l_i(T; t)$ instead of $l_i(T; 0, t)$.

%where $T=T_n=\tau_n-\tau_{n-1}$, if $\tau_n\leq u\leq s\leq\tau_{n+1}$.

%Let $\mathcal T_i,\; i=0, 1$ be defined by \eqref{def:tp}.

In the same manner we define the jump component.
Let  $h_0=h_0(T)$ and $h_1=h_1(T),\; T\geq0$ 
be  a pair of deterministic continuous (or, at least, boundary measurable) functions. 
Consider telegraph processes \eqref{def:tp} based on $h_i(T)$ instead of $c_i=c_i(T, \cdot),\; i=0,1$,
\[
\mathcal T_i(t; h_0, h_1)=\sum_{n=1}^\infty h_{\ep_i(\tau_{n})}(T_{n})\1_{\{\tau_{n}<t\leq\tau_{n+1}\}},
\quad i=0, 1.
\]

An integrated jump process is defined in
the form of compound Poisson process
by the integral
\begin{equation}
\label{def:ijp}
\int\limits_0^t\mathcal T_i(s; h_0, h_1)\rmd N_i(s)
=\sum_{n=1}^{N_i(t)}h_{\ep_i(\tau_{n})}(T_n),\quad i=0, 1.
\end{equation}
The amplitude of a jump depends on the time spent by the particle at the current state.

Finally, the generalised integrated jump-telegraph process 
is the sum of the integrated telegraph process
defined by \eqref{def:itp0}-\eqref{def:itp} and the jump component defined by \eqref{def:ijp}:
%which is the telegraph process \eqref{def:tp} driven by $h_0,\; h_1$:
\begin{equation}\label{def:ijtp}
X_i(t)=\int\limits_0^t\mathcal T_i(s; c_0, c_1)\rmd s
+\int\limits_0^t\mathcal T_i(s; h_0, h_1)\rmd N_i(s),\quad t\geq0,\; i=0, 1.
\end{equation}
It describes the particle which moves,   alternating the velocity regimes at random times $\tau_n$,
starting from the origin  at the velocity regime $c_i$.
 Each velocity reversal is accompanied
by jumps of random amplitudes, $X_i(t)$ is the current particle's position.

Conditioning on the first velocity reversal, notice that
\begin{equation}
\label{eq:X01}
\begin{aligned}
X_0(t)\stackrel{D}{=}&l_0(T_{0}; t)\1_{\{\tau_1>t\}}+\left[l_0(T_{0}; \tau_1)
+h_0(\tau_1)+X_1(t-\tau_1)\right]
\1_{\{\tau_1<t\}},\\
X_1(t)\stackrel{D}{=}&l_1(T_{0}; t)\1_{\{\tau_1>t\}}+\left[l_1(T_{0}; \tau_1)
+h_1(\tau_1)+X_0(t-\tau_1)\right]
\1_{\{\tau_1<t\}}.
\end{aligned}\end{equation}
Here $\stackrel{D}{=}$ denotes the equality in distribution.
At each of two equalities the first term represents 
the movement without velocity reversal; the second one is the 
sum of three terms: the path till the first reversal, 
the jump value and the movement which is initiated after the first reversal.

The distribution of $X(t),\;t>0$ is separated into the singular and absolutely continuous parts.

The singular part of the distribution corresponds to the movement without any velocity reversals;
let $\PP^{(0)}_i,\; i=0, 1$ be  the respective  conditional distribution, if the initial state $i=\ep_i(0)$ is fixed:   
for any Borel set $A$ we set
\[
\PP^{(0)}_i(A)
:=\PP(X_i(t)\in A,\; N_i(t)=0),  % =\bar F_i(t)\int_0^\infty \delta_{l_i(s; t)}(A)f_{1-i}(s)\rmd s,
\quad i=0, 1.
\]
We denote the corresponding expectation by  $\EE^{(0)}_i\{\cdot\}$. 
On the space of (continuous) test-functions $\varphi$ consider the linear functional (generalised function),
$
\varphi\to\EE^{(0)}_i\{\varphi(X(t))\}.
$
It is easy to see that
\[
\EE^{(0)}_i\{\varphi(X(t))\}=\int\limits_{-\infty}^\infty\varphi(y)\PP^{(0)}_i(\rmd y)
=\bar F_i(t)\int\limits_0^\infty \varphi(l_i(s; t))f_{1-i}(s)\rmd s=:<p_i(\cdot, t; 0),\; \varphi>.
\]
The generalised function 
\begin{equation}
\label{eq:p0}
p_i(x, t; 0)=\bar F_i(t)\int_0^\infty \delta_{l_i(s; t)}(x)f_{1-i}(s)\rmd s=
\bar F_i(t)\int_0^\infty \delta_0(x-l_i(s; t))f_{1-i}(s)\rmd s
\end{equation}
can be viewed as the distribution  ``density''. Here $\delta_a(x)$ 
is the Dirac measure (of unit mass) at point $a$.

The absolutely continuous part of the distribution  of $X_i(t)$ is characterised by the densities
\[
p_i(x, t; n)=\PP\{X_i(t)\in\rmd x,\; N_i(t)=n\}/\rmd x,\quad i=0, 1,\; n\geq1.
\]
%here the set of subsequent alternative states is $\{i, 1-i, i, \ldots, \ep_i(\tau_n)\}$.
%Notice that $\forall n\; p_i(x, t; n)\equiv0$, if $x\notin(c_1t, c_0t)$.

%and $<f, \varphi>_s$ denotes the distribution $f$ applied to the test function $\varphi$ w.r.t. the variable $s$.
%Hence the density can be expressed as follows,
%\[
%p_i(x, t; 0)%=\bar F_i(t)\EE\{\delta(x-l_{i}(T_{0}, t))\}
%=\bar F_i(t)\int_0^\infty f_{1-i}(s)\delta(x-l_{i}(s, t))\rmd s
%,\; i=0, 1.
%\]
The sum
\[
p_i(x, t)=\sum_{n=1}^\infty p_i(x, t; n)
\]
corresponds to the absolutely continuous part of distribution of $X_i(t),\;  i=0, 1$. 

Conditioning on the first velocity reversal, similarly to \eqref{eq:X01}
we obtain the following equations, $n\geq 1,$
\begin{equation}\label{eq:pi}
\begin{aligned}
p_0(x, t; n)=&\int_0^\infty f_1(\tau)\rmd\tau\int_0^t  p_1(x-l_{0}(\tau; s)-h_{0}(s), t-s; n-1)f_0(s)\rmd s,\\
p_1(x, t; n)=&\int_0^\infty f_0(\tau)\rmd \tau\int_0^t p_0(x-l_{1}(\tau, s)-h_{1}(s), t-s; n-1)f_1(s)\rmd s
\end{aligned}\end{equation}
(if $n=1$ the inner integrals are understood in the sense of the theory of generalised functions).
%Here $l_i(\cdot),\; i=0, 1$ are defined by \eqref{def:li}, and
% the densities $\tilde p_i(\cdot, \cdot; k-1)$ are characterised by 
%the set of time intervals $\{T_2, T_3, \ldots, T_k\}$.
%of jump amplitudes $\{h_{i}(T_2),\ldots, h_{\ep_i(\tau_k-)}(T_k)\},\; i=0, 1$.
Summing up in \eqref{eq:pi}  we get the system of integral equations for (complete) distribution densities,
\begin{equation}\label{eq:p-complete}
\begin{aligned}
p_0(x, t)=&p_0(x, t; 0)+\int_0^\infty f_1(\tau)\rmd\tau\int_0^t  p_1(x-l_{0}(\tau; s)-h_{0}(s), t-s)f_0(s)\rmd s,\\
p_1(x, t)=&p_1(x, t; 0)+\int_0^\infty f_0(\tau)\rmd \tau\int_0^t p_0(x-l_{1}(\tau, s)-h_{1}(s), t-s)f_1(s)\rmd s.
\end{aligned}\end{equation}
Here $p_0(x, t; 0)$ and $p_1(x, t; 0)$
are defined by \eqref{eq:p0}.

If $c_0, c_1\equiv const,\; h_0, h_1\equiv const$ equations  \eqref{eq:pi}   and 
\eqref{eq:p-complete} can be solved explicitly using the following notations,
\begin{equation*}
\label{def:xi}
\xi=\xi(x, t):=\frac{x-c_1t}{c_0-c_1}\text{~~~and ~~~} t-\xi=\dfrac{c_0t-x}{c_0-c_1}. 
\end{equation*}
Notice that $0<\xi(x, t)<t$, if $x\in(c_1t,\; c_0t)$ (say, $c_0>c_1$). 
Define  the
 functions $q_i(x,t;n)$, $i=0,1$: for $c_1t<x<c_0t$, 
\begin{equation}\label{eq:qipar}
\begin{aligned}
q_0(x, t; 2n)=&\frac{\lambda_0^n\lambda_1^n}{(n-1)!n!}\xi^n(t-\xi)^{n-1} \\
q_1(x, t; 2n)=&\frac{\lambda_0^n\lambda_1^n}{(n-1)!n!}\xi^{n-1}(t-\xi)^n
\end{aligned},\quad n\geq1,\end{equation}
and 
\begin{equation}\label{eq:qiimpar}\begin{aligned}
q_0(x, t; 2n+1)=&\frac{\lambda_0^{n+1}\lambda_1^n}{(n!)^2}\xi^n(t-\xi)^{n} \\
q_1(x, t; 2n+1)=&\frac{\lambda_0^n\lambda_1^{n+1}}{(n!)^2}\xi^{n}(t-\xi)^n
\end{aligned},\quad n\geq0.\end{equation}
Denote
  $\theta(x,t)=\frac{1}{c_0-c_1}\rme^{-\lambda_0\xi-\lambda_1(t-\xi)}\1_{\{0<\xi<t\}}$.

Equations  \eqref{eq:pi}  have the following solution\textup{:}
\begin{equation}\label{pqj}
\begin{aligned}
p_i(x,\; t;\; 0)=&\rme^{-\lambda_it}\d(x-c_it),\\
p_i(x,\; t;\; n)=&q_i(x-j_{in},\; t;\; n)\theta(x-j_{in},\; t),\quad n\geq 1,
%\qquad j_{in}=\sum_{k=1}^nh_{i_k},
   \quad i=0,\; 1,
\end{aligned}
\end{equation}
where the displacements $j_{in}$ are  defined as
the sum of alternating jumps\textup{,} $j_{in}=\sum_{k=1}^nh_{i_k},$ where 
$i_k=i,$ if $k$ is odd\textup{,} and $i_k=1-i,$ if k is even.

Summing up we obtain the solution of \eqref{eq:p-complete}:
\begin{equation}
\label{eq:Distr}
\begin{aligned}
p_i(x,\; t)=&
\rme^{-\lambda_i t}\cdot\delta_0(x-c_it)\\
+&\frac{1}{c_0-c_1}\left[\lambda_i\theta(x-h_i,\ t)
I_0\left(2\frac{\sqrt{\lambda_0\lambda_1(c_0t-x+h_i)(x-h_i-c_1t)}}{c_0-c_1}\right)
\right.\\
+&\left.\sqrt{\lambda_0\lambda_1}\theta(x, t)
\left(\frac{x-c_1t}{c_0t-x}\right)^{\frac{1}{2}-i}
I_1\left(2\frac{\sqrt{\lambda_0\lambda_1(c_0t-x)(x-c_1t)}}{c_0-c_1}\right)\right],
\end{aligned}
\end{equation}%
where $I_0(z)=\sum\limits_{n=0}^\infty\dfrac{(z/2)^{2n}}{(n!)^2}$ and
$I_1(z)=I_0'(z)$ are the modified Bessel functions. 

See the proof of \eqref{eq:qipar}-\eqref{eq:Distr} in \citet{R2007}.

%%%%%%%%%%%%%%%%%%%%%%%%%%%%%%%%%%%%%%%%%%%%
\section{Generalised jump-telegraph processes: moments}\label{sec:2-2}
% with Erlang-distributed inter-arrival times}
\setcounter{equation}{0}

Using \eqref{eq:p-complete}  the equations for the expectations can be derived also.
Let $\mu_i(t):=\EE \{X_i(t)\}$ and
 $\bar l_i(\cdot):=\EE\{l_i(T; t)\}=\int_0^\infty f_{1-i}(\tau)l_i(\tau; t)\rmd\tau,\; t\geq0$.
Equations \eqref{eq:p-complete} lead  to 
\[
\mu_i(t)=\bar F_i(t)\bar l_i(t)+\int_0^t\left(\bar l_i(s)+h_i(s)+\EE\{X_{1-i}(t-s)\}\right)f_i(s)\rmd s,\quad i=0, 1.
\]
Therefore the expectations $\mu_i,\; i=0, 1$ follow the equations of  Volterra type:
\begin{equation}
\label{eq:mu1}\begin{aligned}
\mu_0(t)=&a_0(t)+\int_0^t\mu_1(t-s)f_0(s)\rmd s,\\
\mu_1(t)=&a_1(t)+\int_0^t\mu_0(t-s)f_1(s)\rmd s,
\end{aligned}\end{equation}
where
\[
a_i(t):=\bar F_i(t)\bar l_i(t)+\int_0^t(\bar l_i(s)+h_i(s))f_i(s)\rmd s,\quad i=0, 1.
\]

Integrating by parts at the latter integral we have 
\[
\int_0^t\bar l_i(s)f_i(s)\rmd s=-\bar F_i(t)\bar l_i(t)+\int_0^t\bar c_i(s)\bar F_i(s)\rmd s, 
\]
which gives the following simplification for functions $a_i$:
\begin{equation}
\label{eq:a}
a_i(t)=\int_0^t\left(
\bar F_i(s)\bar c_i(s)+f_i(s)h_i(s)
\right)\rmd s.
\end{equation}
Here we denote 
$\bar c_i(s)=\EE\{c_i(\cdot; s)\}=\int_0^\infty f_{1-i}(\tau)c_i(\tau; s)\rmd \tau,\; i=0, 1$.

Equations for variances $\sigma_i(t):=\mathrm{var}\{X_i(t)\}=\EE\{\left(X_i(t)-\mu_i(t)\right)^2\}$
can be derived similarly:
\begin{equation}
\label{eq:var}\begin{aligned}
\sigma_0(t)=&b_0(t)+\int_0^t\sigma_1(t-s)f_0(s)\rmd s,\\
\sigma_1(t)=&b_1(t)+\int_0^t\sigma_0(t-s)f_1(s)\rmd s,
\end{aligned}\end{equation}
where
\[
b_i(t):=\bar F_i(t)\left(\bar l_i(t)-\mu_i(t)\right)^2
+\int_0^t\left(\bar l_i(s)+h_i(s)+\mu_{1-i}(t-s)-\mu_i(t)\right)^2f_i(s)\rmd s,\quad i=0,1.
\]

Generalising \eqref{eq:mu1}-\eqref{eq:var}, we have the following result.

%Using equations \eqref{eq:pi}, we derive the explicit Volterra integral equation for 
%the moments of  integrated  jump-telegraph process $X_i=X_i(t),\; t\geq0, i=0, 1$.        
%It is easy to obtain the following general result.

\begin{theo}\label{th:moments}
Let $g=g(x),\;-\infty<x<\infty$ be a locally bounded measurable function.
Assume that  
\begin{equation}\label{cond:int}
\int_0^\infty f_{1-i}(\tau)|g(x+l_i(\tau; t))|\rmd \tau<\infty,\quad i=0, 1.
\end{equation}
 Then the expectations
\[
u_0(x, t)=\EE\{g(x+X_0(t))\},\quad u_1(x, t)=\EE\{g(x+X_1(t))\}
\]
exist\textup{,} and they satisfy the system
\begin{align}
\label{u0}
   u_0(x, t) =& G_0(x, t)+\int_0^\infty\int_0^tu_1(x+l_0(\tau; s)+h_0(s), t-s)f_1(\tau)f_0(s)\rmd \tau\rmd s,
   \\
\label{u1}
u_1(x, t)    =&   G_1(x, t)+\int_0^\infty\int_0^tu_0(x+l_1(\tau; s)+h_1(s), t-s)f_0(\tau)f_1(s)\rmd \tau\rmd s,
\end{align}
where $G_i(x, t)=\bar F_i(t)\int_0^\infty f_{1-i}(\tau)g(x+l_i(\tau; t))\rmd \tau,\; i=0, 1$.
\end{theo}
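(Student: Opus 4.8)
The plan is to obtain \eqref{u0}--\eqref{u1} directly from the one-step distributional recursion \eqref{eq:X01}, by applying $g(x+\,\cdot\,)$ to both sides and taking expectations. The structural facts I will use are that, for the process started in state $i$, the previous sojourn time $T_0$ has density $f_{1-i}$, the time to the first reversal $\tau_1=T_1$ has density $f_i$ and survival $\bar F_i$, and, by the independence of the increments $\{T_n\}$, the post-reversal path $X_{1-i}(t-\tau_1)$ is an independent copy of the state-$(1-i)$ telegraph process, independent of the pair $(T_0,\tau_1)$.

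First I would carry out the computation for $u_0$, splitting the expectation over the events $\{\tau_1>t\}$ and $\{\tau_1<t\}$. On $\{\tau_1>t\}$ the recursion gives $X_0(t)=l_0(T_0;t)$; since $T_0$ (density $f_1$) is independent of $\tau_1$ and $\PP(\tau_1>t)=\bar F_0(t)$, this part contributes $\bar F_0(t)\int_0^\infty g(x+l_0(\tau;t))f_1(\tau)\,\rmd\tau=G_0(x,t)$. On $\{\tau_1<t\}$ the recursion gives $X_0(t)=l_0(T_0;\tau_1)+h_0(\tau_1)+X_1(t-\tau_1)$; conditioning on $T_0=\tau$ (density $f_1$) and $\tau_1=s\in(0,t)$ (density $f_0$), and recognising $\EE\{g(z+X_1(t-s))\}=u_1(z,t-s)$ with $z=x+l_0(\tau;s)+h_0(s)$, this part contributes exactly the double integral in \eqref{u0}. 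Adding the two pieces yields \eqref{u0}, and \eqref{u1} follows by interchanging the roles of the indices $0$ and $1$. Specialising to $g(x)=x$ should reproduce the mean equations \eqref{eq:mu1}, with the variance equations \eqref{eq:var} following along the same lines, which serves as a consistency check.

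The delicate point, which I expect to be the main obstacle, is existence of $u_i$ and the Fubini interchange of the three integrations (over $T_0$, over $\tau_1$, and the expectation defining $u_{1-i}$). The key observation is that on any finite horizon the only genuinely unbounded ingredient of $X_i(t)$ is the first velocity leg $l_i(T_0;0,\tau_1)$, because every later sojourn falling in $[0,t]$ is bounded by $t$, so the continuous velocities $c_\cdot$ and the amplitudes $h_\cdot$ stay bounded there, while $\tau_1\le t$ gives $|l_i(T_0;0,\tau_1)|\le\int_0^t|c_i(T_0,u)|\,\rmd u$. Thus $|g(x+X_i(t))|$ is dominated, up to a locally bounded correction controlled by the a.s.\ finiteness of the number of reversals on $[0,t]$, by a quantity whose $\PP$-integral is governed by $\int_0^\infty f_{1-i}(\tau)|g(x+l_i(\tau;t))|\,\rmd\tau$. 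Condition \eqref{cond:int} makes this finite and simultaneously legitimises Fubini; the remaining algebra of identifying $G_i$ and the convolution kernel is then routine.
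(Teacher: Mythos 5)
Your proposal is correct and takes essentially the same route as the paper: the paper's entire proof consists of the single sentence that \eqref{u0}--\eqref{u1} follow by conditioning on the first velocity reversal via \eqref{eq:X01}, which is exactly the computation you carry out in detail. Your additional discussion of existence and the Fubini interchange goes beyond what the paper records, but does not change the approach.
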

\begin{proof}
Equations \eqref{u0}-\eqref{u1} follow by conditioning on the first velocity reversal,
see \eqref{eq:X01}.
\end{proof}

The equations for the moments $\mu_i^{(N)}(t):=\EE\left\{ X_i(t)^N\right\},\; t\geq0,\; N\geq0$
can be derived  by using Theorem \ref{th:moments} with $g(x)=x^N$, see
\eqref{u0}-\eqref{u1}.

\begin{cor} Let $N=0, 1, 2,  \ldots$

Functions $\mu_0^{(k)}(t), \mu_1^{(k)}(t),\; t\geq0, k=0, 1,\ldots N$ satisfy the equations
\begin{equation}\label{eq:mun}
\begin{aligned}
\mu_0^{(N)}(t)=&\bar F_0(t)\int_0^\infty f_1(\tau) l_0(\tau; t)^N\rmd\tau+\sum_{k=0}^N
\begin{pmatrix}
    N   \\
      k  
\end{pmatrix}
\int_0^t g_{0, N-k}(s)\mu_1^{(k)}(t-s)f_0(s)\rmd s,\\
\mu_1^{(N)}(t)=&\bar F_1(t)\int_0^\infty f_0(\tau)l_1(\tau; t)^N\rmd\tau+\sum_{k=0}^N
\begin{pmatrix}
     N   \\
      k  
\end{pmatrix}
\int_0^t g_{1, N-k}(s)\mu_0^{(k)}(t-s)f_1(s)\rmd s.
\end{aligned}\end{equation}
Here $g_{0, 0}=g_{1,0}\equiv1$ and 
\[\begin{aligned}
g_{0, m}(t)=&\int_0^\infty f_1(\tau)\left(l_0(\tau; t)+h_0(t)\right)^m\rmd\tau,\\
g_{1, m}(t)=&\int_0^\infty f_0(\tau)\left(l_1(\tau; t)+h_1(t)\right)^m\rmd\tau,
\end{aligned}\quad m\geq1.\]
\end{cor}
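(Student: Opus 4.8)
The plan is to invoke Theorem \ref{th:moments} with the particular choice $g(x)=x^N$ and then read off the equations at $x=0$. With this $g$ the quantity $u_i(x,t)=\EE\{(x+X_i(t))^N\}$ is a polynomial in $x$, namely $u_i(x,t)=\sum_{j=0}^N\binom{N}{j}x^{\,j}\mu_i^{(N-j)}(t)$, whose value at the origin is $u_i(0,t)=\EE\{X_i(t)^N\}=\mu_i^{(N)}(t)$; hence specialising \eqref{u0}--\eqref{u1} to $x=0$ should already produce equations of exactly the advertised shape. The first obstacle is purely one of admissibility: to apply the theorem I must check that hypothesis \eqref{cond:int} holds for $g(x)=x^N$, i.e. that the $N$-th moments $\int_0^\infty f_{1-i}(\tau)\,l_i(\tau;t)^N\,\rmd\tau$ are finite. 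I would assume this (it is implicit in the corollary's statement) and note that it also guarantees the finiteness of every term appearing below.

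Next I would treat the two contributions separately. The free term is immediate: at $x=0$ one has $G_i(0,t)=\bar F_i(t)\int_0^\infty f_{1-i}(\tau)\,l_i(\tau;t)^N\,\rmd\tau$, which is precisely the leading term of \eqref{eq:mun}. For the integral term I would expand the integrand by the binomial theorem. Writing $a:=l_0(\tau;s)+h_0(s)$, a deterministic amplitude given $\tau$ and $s$, and recalling that $u_1(a,t-s)=\EE\{(a+X_1(t-s))^N\}$, the key algebraic step is
\[
u_1\bigl(l_0(\tau;s)+h_0(s),\,t-s\bigr)=\sum_{k=0}^N\binom{N}{k}\bigl(l_0(\tau;s)+h_0(s)\bigr)^{N-k}\,\mu_1^{(k)}(t-s),
\]
where I have used that only $X_1(t-s)$ is random and that $\EE\{X_1(t-s)^k\}=\mu_1^{(k)}(t-s)$.

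Finally I would substitute this expansion back into \eqref{u0}, interchange the (finite) sum with the $\tau$- and $s$-integrations, which is legitimate because each summand is integrable under the moment hypothesis, and carry out the $\tau$-integration first. Integrating $(l_0(\tau;s)+h_0(s))^{N-k}$ against $f_1(\tau)$ produces exactly $g_{0,N-k}(s)$ when $N-k\ge1$, while for $N-k=0$ the integral of the density $f_1$ over $[0,\infty)$ equals $1$, giving $g_{0,0}\equiv1$; the remaining $s$-integral against $f_0(s)$ then yields the stated sum for $\mu_0^{(N)}$, and the companion computation with the roles of $0$ and $1$ interchanged gives the equation for $\mu_1^{(N)}$. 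The only genuinely delicate point is the verification of \eqref{cond:int} together with the integrability needed to justify the interchange of summation and integration; the rest is routine bookkeeping with the binomial coefficients and the identification of the auxiliary functions $g_{i,m}$.
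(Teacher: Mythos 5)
Your proposal is correct and follows exactly the route the paper intends: apply Theorem \ref{th:moments} with $g(x)=x^N$, evaluate at $x=0$, expand $u_{1-i}\bigl(l_i(\tau;s)+h_i(s),\,t-s\bigr)$ by the binomial theorem, and integrate out $\tau$ to recover the functions $g_{i,m}$. The paper leaves all of this implicit (it only remarks that the corollary follows from the theorem with this choice of $g$), so your write-up is simply a more detailed version of the same argument, with the admissibility check \eqref{cond:int} correctly flagged as the only hypothesis to verify.
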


In general, systems \eqref{eq:mu1}, \eqref{eq:var} and \eqref{eq:mun} 
have the form of the recursive Volterra equations of the second kind:
\begin{equation}
\label{eq:moments}
\begin{aligned}
\mu_0^{(N)}(t)=&a_{0}^{(N)}(t)+\int_0^t\mu_1^{(N)}(t-s)f_0(s)\rmd s,\\
\mu_1^{(N)}(t)=&a_{1}^{(N)}(t)+\int_0^t\mu_0^{(N)}(t-s)f_1(s)\rmd s,
\end{aligned}
\end{equation}
where $a_{i}^{(N)}(t),\; i=0, 1$ are generated by the preceding moments, $\mu_{1-i}^{(k)},\; k=0,\ldots N-1:$
\begin{equation}\label{def:a}
\begin{aligned}
a_{0}^{(N)}(t):=&
\bar F_0(t)\int_0^\infty l_0(\tau; t)^Nf_1(\tau)\rmd\tau+\sum_{k=0}^{N-1}
\begin{pmatrix}
     N   \\
      k  
\end{pmatrix}
\int_0^t g_{0, N-k}(s)\mu_1^{(k)}(t-s)f_0(s)\rmd s,\\
a_{1}^{(N)}(t):=&\bar F_1(t)\int_0^\infty l_1(\tau; t)^Nf_0(\tau)\rmd\tau+\sum_{k=0}^{N-1}
\begin{pmatrix}
     N   \\
      k  
\end{pmatrix}
\int_0^t g_{1, N-k}(s)\mu_0^{(k)}(t-s)f_1(s)\rmd s.
\end{aligned}\end{equation}
Here $N\geq1$. 

System \eqref{eq:moments} possesses a unique solution, see e.g. \citet{Linz}.
Under appropriate assumptions the solution can be found explicitly.
Consider the following example. 
Let the distributions of interarrival times are exponential:
\[
f_i(t)=\lambda_i\exp(-\lambda_it),\quad t\geq0,\; i=0, 1.
\]
In this particular case system \eqref{eq:moments} is solved by
\begin{equation}
\label{eq:mu}
\boldsymbol{\mu}(t)=\boldsymbol{a}(t)+\int_0^t\left(I+\varphi(t-s)\Lambda\right)L\boldsymbol{a}(s)\rmd s,
\end{equation}
where $\varphi(t)=(1-\rme^{-2\lambda t})/(2\lambda),\; 2\lambda:=\lambda_0+\lambda_1$. 
Here we use the matrix notations
$\boldsymbol{\mu}=(\mu_0^{(N)}, \mu_1^{(N)})',$ 
$\boldsymbol{a}=(a_{0}^{(N)}, a_{1}^{(N)})'$,
 \[
 L=\begin{pmatrix}
0  & \lambda_0   \\
\lambda_1  &  0  
\end{pmatrix}\qquad \text{and}\qquad
\Lambda=\begin{pmatrix}
  -\lambda_0    &  \lambda_0  \\
  \lambda_1    &  -\lambda_1
\end{pmatrix}
.\]
%%%%%%%%%%%%%%%%%%%%%%%%%%%%%

To check it, notice that system \eqref{eq:moments} is equivalent to ODE with zero initial condition:
\[
\frac{\rmd \boldsymbol{\mu}}{\rmd t}=\Lambda\boldsymbol{\mu}(t)+\boldsymbol{\phi}(t),\quad \boldsymbol{\mu}(0)=0,
\]
where $\boldsymbol{\phi}=\dfrac{\rmd\boldsymbol{a} }{\rmd t}+(L-\Lambda)\boldsymbol{a}.$
We get this equation by differentiating in \eqref{eq:moments} with subsequent integration by parts.
Clearly,  the unique solution is
\begin{equation}
\label{sol:mu}
\boldsymbol{\mu}(t)=\int_0^t\rme^{(t-s)\Lambda}\boldsymbol{\phi}(s)\rmd s.
\end{equation}
Integrating by parts in \eqref{sol:mu} we obtain 
\[
\boldsymbol{\mu}(t)
=\boldsymbol{a}(t)+\int_0^t\rme^{(t-s)\Lambda}L\boldsymbol{a}(s)\rmd s.
\]
Now, the desired representation \eqref{eq:mu} follows from 
\begin{equation}\label{def:expL}
\exp\{t\Lambda\}={\rm I}+\varphi(t)\Lambda=\frac{1}{2\lambda}
\begin{pmatrix}
 \lambda_1+\lambda_0\rme^{-2\lambda t}     &   \lambda_0(1-\rme^{-2\lambda t})   \\
   \lambda_1(1-\rme^{-2\lambda t})     &   \lambda_0+\lambda_1\rme^{-2\lambda t}  
\end{pmatrix}.
\end{equation}

\section{Martingales}\label{sec:mart}
\setcounter{equation}{0}

Let $X_0=X_0(t)$ and $X_1=X_1(t)$ be (integrated) telegraph processes
defined by \eqref{def:ijtp} on the probability space $(\Omega, \mathcal{F}, \PP)$.
Let $\mu_i(t)=\EE\{X_i(t)\},\; i=0, 1$ denotes  the expectations, and 
coefficients $a_i(t),\; i=0, 1$ are defined by 
 \eqref{eq:a}.
%Let $\boldsymbol{\mu}=\boldsymbol{\mu}(t)=(\mu_0(t),\; \mu_1(t))'$  vector

Notice that by \eqref{eq:mu1} % (see also \eqref{eq:moments} and
%\eqref{eq:mu}) it is trivial, that 
$\mu_0=\mu_1\equiv0$ if and only if
$a_0=a_1\equiv0$,
which is equivalent to the set of identities, see \eqref{eq:a},
\begin{equation}\label{eq:DoobMeyer}
\begin{aligned}
\bar F_0(t)\bar c_0(t)+h_0(t)f_0(t)\equiv0\\
\bar F_1(t)\bar c_1(t)+h_1(t)f_1(t)\equiv0
\end{aligned}\;,\qquad t\geq0.
\end{equation}

Let $\mathcal{F}_t,\; t\geq0$ be the filtration, generated by 
$\{(X_0(s),\; X_1(s))~|~s\leq t\}$.

\begin{theo}
The integrated  jump-telegraph processes $X_0$ and $X_1$ defined by \eqref{def:ijtp}
are $\mathcal{F}_t$-martingales if and only if \eqref{eq:DoobMeyer} holds.
\end{theo}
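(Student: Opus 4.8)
The plan is to establish the martingale property in two directions, using the conditional structure already encoded in \eqref{eq:X01} as the engine. First I would prove the easier ``only if'' direction: if either $X_0$ or $X_1$ is an $\mathcal F_t$-martingale, then in particular $\EE\{X_i(t)\}=\EE\{X_i(0)\}=0$ for all $t\ge0$, so $\mu_0\equiv\mu_1\equiv0$; by the equivalence noted just before the statement (via \eqref{eq:mu1} and the explicit form \eqref{eq:a}), this forces $a_0\equiv a_1\equiv0$, which is precisely \eqref{eq:DoobMeyer}. The content is therefore concentrated in the ``if'' direction.

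For the ``if'' direction I would argue as follows. Assume \eqref{eq:DoobMeyer}, equivalently $a_0\equiv a_1\equiv0$, equivalently (by \eqref{eq:mu1}) $\mu_0\equiv\mu_1\equiv0$. The key observation is that the process is built from an alternating renewal structure: on the event that no switch has occurred by time $s$, the increment over $(s,t]$ is a deterministic function of the elapsed time $T_0$, while after a switch the process regenerates, started at the new state with a fresh (independent) interarrival clock. I would exploit this regeneration to compute $\EE\{X_i(t)-X_i(s)\mid\mathcal F_s\}$. The decomposition \eqref{eq:X01} is the one-step version of this; the conditional expectation given $\mathcal F_s$ splits according to whether the next switch after $s$ falls before or after $t$, and in each case the forward increment is governed by the same $a_i$-type quantities that vanish under \eqref{eq:DoobMeyer}. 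Concretely, conditioning on the state occupied at time $s$ and on the age of the current sojourn, the expected forward increment is an integral against the conditional (residual-lifetime) interarrival density of exactly the integrand $\bar F_i\bar c_i+h_i f_i$ appearing in \eqref{eq:a}, which is identically zero by hypothesis; hence $\EE\{X_i(t)\mid\mathcal F_s\}=X_i(s)$.

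The step I expect to be the main obstacle is handling the \emph{residual} sojourn time correctly. Because the velocity regime $c_i(T,\cdot)$ and the jump amplitude $h_i(T)$ both depend on the \emph{full} sojourn length $T$ and not merely on the elapsed portion, conditioning on $\mathcal F_s$ in the middle of a sojourn does not simply reproduce the unconditional functions $\bar c_i,\bar l_i$; one must condition the interarrival distribution on survival past the current age. I would therefore need to verify carefully that the age-biased (conditional) version of the identity $\bar F_i(t)\bar c_i(t)+h_i(t)f_i(t)\equiv0$ still integrates to zero, so that the vanishing of $a_i$ propagates to every conditional increment and not merely to the increment from time $0$. Once this residual-lifetime bookkeeping is in place, the martingale identity $\EE\{X_i(t)\mid\mathcal F_s\}=X_i(s)$ follows by the regenerative decomposition above, integrability being guaranteed by the moment equations of Section \ref{sec:2-2}, and this completes the proof.
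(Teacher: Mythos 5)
Your overall strategy is the same as the paper's: both reduce the claim, via the regenerative decomposition behind \eqref{eq:X01}, to the vanishing of the coefficients $a_i$ of \eqref{eq:a}, and your ``only if'' direction (martingale $\Rightarrow\mu_i\equiv0\Rightarrow a_i\equiv0$ by \eqref{eq:mu1}) is complete. The difficulty is that your ``if'' direction stops exactly at the step you yourself flag as the main obstacle, and that step is not routine bookkeeping. Conditioning on $\F_{t_1}$ in the middle of a sojourn, the expected forward increment depends on two $\F_{t_1}$-measurable quantities: the age $u=t_1-\tau_{N_i(t_1)}$ of the current sojourn, which biases the law of the residual interarrival time to $f_i(v)/\bar F_i(u)$ on $v>u$, \emph{and} the length $w=T_{N_i(t_1)}$ of the previous sojourn, which is recoverable from the observed path (the switching times are jump times of $X$) and enters the increment through the current velocity $c_i(w,\cdot)$. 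Writing out the one-step conditional increment on this event and differentiating in $t_2$ at $t_2=t_1$, the quantity that must vanish is $\bar F_i(u)\,c_i(w,u)+h_i(u)f_i(u)$ for every age $u$ and (almost) every realized $w$, whereas \eqref{eq:DoobMeyer} only asserts the vanishing of the $f_{1-i}$-average $\bar F_i(t)\bar c_i(t)+h_i(t)f_i(t)$. So the propagation you say you ``would need to verify carefully'' does not in fact follow from \eqref{eq:DoobMeyer} alone when $c_i(T,\cdot)$ genuinely depends on $T$; an additional argument or assumption is required, and the proposal as written does not close the ``if'' direction.

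For comparison, the paper's proof computes $\EE\{X_i(t_2)-X_i(t_1)\mid\F_{t_1}\}$ and then invokes the Markov property of $\ep_i$, $N_i$ and $\{\tau_k\}$ to replace the post-$t_1$ evolution by fresh copies $\tilde\tau_n,\tilde T_n$ independent of $\F_{t_1}$, obtaining $\EE\{X_i(t_2)-X_i(t_1)\mid\F_{t_1}\}=\EE\{\tilde X_{\ep_i(t_1)}(t_2-t_1)\}$; the entire age/previous-sojourn issue is absorbed into that asserted distributional identity rather than verified. You have therefore located precisely the point at which the argument needs justification, but you have not supplied the justification, so the key implication of the theorem remains unproved in your write-up.
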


\begin{proof}
The proof can be done by computing  the  conditional expectation
$\EE\{X_i(t_2)-X_i(t_1)~|~\F_{t_1}\}$  for $0\leq t_1\leq t_2$.
Indeed, 
\[
\EE\{X_i(t_2)-X_i(t_1)~|~\F_{t_1}\}
=\EE\left\{\int_{t_1}^{t_2}\mathcal T_i(s; c_0, c_1)\rmd s+
\sum_{n=N_i(t_1)+1}^{N_i(t_2)}
h_{\ep_i(\tau_n)}(T_n)~|~\F_{t_1}\right\}
\]
\[
=\EE\left\{
\int_0^{t_2-t_1}\mathcal T_{\ep_i(t_1+s)}(t_1+s)\rmd s+
\sum_{n=1}^{N_i(t_2)-N_i(t_1)}h_{\ep_i(\tau_{n+N_i(t_1)})}(T_{n+N_i(t_1)})~|~\F_{t_1}\right\}
\]

According to the Markov property applied to the processes 
$\ep_i=\ep_i(t),\; N_i=N_i(t)$ and $\{\tau_k\}$ we have
\[\begin{aligned}
\ep_i(t_1+s)\stackrel{D}{=}&\tilde \ep_{\ep_i(t_1)}(s),&\quad
N_i(t_1+s)\stackrel{D}{=}& N_i(t_1)+\tilde N_{\ep_i(t_1)}(s),&\quad    s\geq0,\\
\tau_{n+N(t_1)}\stackrel{D}{=}&\tilde \tau_n,&\qquad
T_{n+N(t_1)}\stackrel{D}{=}&\tilde T_n,& \quad n\geq1,
\end{aligned}\]
where $\tilde \ep(s),\; \tilde N(s),\; \tilde\tau_n$ and $\tilde T_n$ 
are copies of %the processes
$\ep(s),\; N(s),\; \tau_n$ and $T_n$ respectively,
independent of $\mathcal F_{t_1}$. Therefore,  
\begin{equation*}\label{eq:EE}
\EE\{X_i(t_2)-X_i(t_1)~|~\F_{t_1}\}
= \EE\{\tilde X_{\ep_i(t_1)}(t_2-t_1)\}.
\end{equation*}
Here $\tilde X_{\ep_i(t_1)}$ denotes the integrated jump-telegraph process, which is
initiated from the state $\ep_i(t_1)$, and is 
based on  $\tilde \ep(s),\; \tilde N(s),\; \tilde\tau_n$ and $\tilde T_n$.  
The latter expectation is equal to zero,
$\EE\{\tilde X_{\ep_i(t_1)}(t_2-t_1)\}\equiv0$, if and only if \eqref{eq:DoobMeyer} holds.\end{proof}

\begin{rem}\label{rem:signs}
Notice that if % identities
\eqref{eq:DoobMeyer} holds, then the  direction of jump should be opposite 
to the \textup{(}mean\textup{)} velocity value.
\end{rem}

%Equations \eqref{eq:DoobMeyer} follow from Proposition \eqref{prop:DoobMeyer}.
\begin{cor}\label{cor}
If the jump-telegraph processes $X_0$ and $X_1$ defined by \eqref{def:ijtp}
are martingales\textup{,}  then 
\begin{align}\label{cond:DM}
\frac{\bar c_i(t)}{h_i(t)}<&0 \qquad\forall t\geq 0,\\
\label{cond:dens}
\int_0^\infty\frac{\bar c_i(s)}{h_i(s)}\rmd s=&\infty, \quad i=0, 1.
\end{align}
%\end{equation}

Moreover,
$X_0$ and $X_1$ are martingales, if and only if the distribution densities of interarrival times 
% the martingale distribution densities 
 satisfy the following integral relations\textup{:}
\begin{equation}\label{eq:fi}
f_i(t)=-\frac{\bar c_i(t)}{h_i(t)}\exp\left\{\int_0^t\frac{\bar c_i(s)}{h_i(s)}\rmd s\right\},\qquad i=0, 1.
\end{equation}
%Here $\overline{c_i(\cdot, t)}$ is defined by \eqref{def:barc}.
\end{cor}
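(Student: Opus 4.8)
The plan is to take as given the martingale characterisation already proved: by the preceding theorem, $X_0$ and $X_1$ are $\F_t$-martingales if and only if the identities \eqref{eq:DoobMeyer} hold. I would regard those two identities as the starting point and read off everything in the corollary from them, treating each index $i\in\{0,1\}$ separately since the two relations decouple.

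First I would rearrange \eqref{eq:DoobMeyer} into
\[
f_i(t)=-\frac{\bar c_i(t)}{h_i(t)}\,\bar F_i(t),\qquad i=0,1,
\]
and then exploit the elementary relation $f_i=-\bar F_i{}'$, valid because $\bar F_i=1-F_i$ and $f_i$ is the density of the interarrival time. Substituting converts the algebraic martingale identity into the separable first-order linear ODE
\[
\bar F_i{}'(t)=\frac{\bar c_i(t)}{h_i(t)}\,\bar F_i(t),\qquad \bar F_i(0)=1,
\]
whose unique solution is $\bar F_i(t)=\exp\bigl\{\int_0^t \bar c_i(s)/h_i(s)\,\rmd s\bigr\}$. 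This single computation is the crux of the whole corollary; once it is in place, everything else follows by imposing the defining properties of a survival function.

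Next I would extract the two necessary conditions. Because $\bar F_i$ must be a genuine survival function, it has to be non-increasing with a strictly positive density, which forces $\bar F_i{}'<0$, i.e. $\bar c_i/h_i<0$; this is exactly \eqref{cond:DM}. Properness of the interarrival law (the distribution must not be defective, so $\bar F_i(t)\to 0$ as $t\to\infty$) then forces the exponent $\int_0^t \bar c_i(s)/h_i(s)\,\rmd s$ to diverge, which is \eqref{cond:dens}. Differentiating the solved survival function, or equivalently substituting $\bar F_i$ back into $f_i=-(\bar c_i/h_i)\,\bar F_i$, yields the explicit representation \eqref{eq:fi}.

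Finally, for the converse I would simply run the chain of equivalences backwards: if $f_i$ is given by \eqref{eq:fi} with \eqref{cond:DM}--\eqref{cond:dens} in force, then $f_i$ is a bona fide density whose associated survival function satisfies the ODE, hence \eqref{eq:DoobMeyer} holds and the theorem supplies the martingale property. I expect the only genuinely delicate point to be the bookkeeping around the sign of $\bar c_i/h_i$ and the divergence of the exponent, namely verifying that \eqref{eq:fi} really does define a probability density (nonnegativity coming from \eqref{cond:DM} and unit total mass from \eqref{cond:dens}), rather than any hard analytic estimate.
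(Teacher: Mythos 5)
Your proposal is correct and follows essentially the same route as the paper: both reduce the corollary to the identities \eqref{eq:DoobMeyer} from the preceding theorem, rewrite them as $\bar c_i/h_i=-f_i/\bar F_i=(\ln\bar F_i)'$, integrate to get $\bar F_i(t)=\exp\{\int_0^t \bar c_i(s)/h_i(s)\,\rmd s\}$, and read off \eqref{cond:DM}, \eqref{cond:dens} (from $\bar F_i(t)\to 0$) and \eqref{eq:fi}. Your added remarks on the reverse implication and on checking that \eqref{eq:fi} defines a genuine density are a slight elaboration of what the paper leaves implicit, but the argument is the same.
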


\begin{proof}
Inequality \eqref{cond:DM}  follows directly from \eqref{eq:DoobMeyer}.
Identities \eqref{eq:DoobMeyer} are equivalent to 
\begin{equation}\label{barch}
\frac{\bar c_i(t)}{h_i(t)}=-\frac{f_i(t)}{\bar F_i(t)}\equiv (\ln \bar F_i(t))',\quad i=0, 1.
\end{equation}
Therefore
\[
\bar F_i(t)=\exp\left\{\int_0^t\frac{\bar c_i(s)}{h_i(s)}\rmd s\right\},\qquad t\geq0,\; i=0, 1.
\]
The latter equality is equivalent to  \eqref{eq:fi}. 

Notice that by definition $\lim_{t\to+\infty}\bar F_i(t)=0.$     
 Hence, condition \eqref{cond:dens} is fulfilled.
\end{proof}

In this framework various particular
cases of the  martingale distributions and the corresponding distributions of 
 interarrival times can be presented by applying Corollary \ref{cor}. 
Consider the following examples.

%\begin{enumerate}
%\item\label{ex:exp}
  \emph{Exponential distribution.}
Assume that functions $\bar c_i(t)$ and $h_i(t)$ are proportional\textup{:}
\begin{equation}\label{eq:barch}
\frac{\bar c_i(t)}{h_i(t)}\equiv-\lambda_i,\qquad \lambda_i>0,\; i=0, 1.
\end{equation}
Relations \eqref{eq:fi} mean that the integrated jump-telegraph process
is the martingale if the
%From equations \eqref{barch} it easily  follows that $\bar F_i(t)=\exp\{-\lambda_it\}$. Hence,  
 distributions of interarrival times are exponential\textup{:} 
$f_i(t)=\lambda_i\exp(-\lambda_it),$  $t>0,\; i=0, 1.$

%For the particular case of $c_i=const,\; h_i=const$  
% the detailed  analysis is presented in \textup{\cite{R2007}.}
 
Identities \eqref{eq:barch} can be written in detail as follows.
The (observable) parameters of the model,  i. e. the regimes of velocities $c_0, c_1$
and the regimes of jumps $h_0, h_1$,
satisfy the equations
\[
\lambda_{1}\int_0^\infty\rme^{-\lambda_{1}\tau}c_0(\tau, t)\rmd \tau
=-\lambda_0h_0(t),\qquad
\lambda_{0}\int_0^\infty\rme^{-\lambda_{0}\tau}c_1(\tau, t)\rmd \tau
=-\lambda_1h_1(t)
\]
with some positive constants $\lambda_0$ and $\lambda_1$. 
These equations help to compute 
the switching intensities $\lambda_0$ and $\lambda_1$  by using the (observable)  proportion between velocity  and jump values. 
On the other hand, if mean velocity regimes are given,
$\bar c_0$ and $\bar c_1$,   from these equations we can conclude that
 small jumps occur with high frequency, and big jumps are rare.
The direction of jump should be opposite to the velocity sign, see also Remark \ref{rem:signs}.

\begin{prop}\label{prop}
In the framework of  \eqref{def:ijtp} we assume that 
the Markov flow of switching times $\frak{T}=\{\tau_k\}_{k=0}^\infty$ has interarrival intervals
 $\tau_k-\tau_{k-1},\; k\geq1$
which are exponentially distributed with alternated constant 
intensities $\mu_0, \mu_1>0$. 
 Let the velocity regimes $c_i=c_i(t)$ and
 jump amplitudes $h_i=h_i(t)$ are given, and they are proportional
 as in \eqref{eq:barch},
 $c_i(t)/h_i(t)=-\lambda_i,\; i=0, 1.$ 
 
 The martingale measure for $(X_0, X_1)$  exists and it is unique.
\end{prop}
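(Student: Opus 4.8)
The plan is to reduce everything to the characterization already obtained in Corollary \ref{cor}. First I would observe that, since in this setting $c_i$ and $h_i$ depend on $t$ alone, the averaging is trivial: $\bar c_i(t)=\int_0^\infty f_{1-i}(\tau)c_i(t)\,\rmd\tau=c_i(t)$, so the hypothesis $c_i(t)/h_i(t)=-\lambda_i$ is precisely \eqref{eq:barch}, namely $\bar c_i(t)/h_i(t)\equiv-\lambda_i$. By Corollary \ref{cor}, under any candidate martingale measure the interarrival densities must satisfy \eqref{eq:fi}; substituting \eqref{eq:barch} collapses the right-hand side to $\lambda_i\rme^{-\lambda_i t}$. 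Hence the only law of the switching flow compatible with the martingale property is the alternating Poisson flow with intensities $(\lambda_0,\lambda_1)$, and this fixes the candidate measure $\QQ$ uniquely before any construction is carried out.

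For existence I would construct $\QQ$ from $\PP$ by the classical change of intensity for a point process, rescaling the sojourn rate in state $i$ from $\mu_i$ to $\lambda_i$. On $\F_t$ the Radon--Nikodym density takes the product form
\[
Z_t=\frac{\rmd\QQ}{\rmd\PP}\Big|_{\F_t}
=\left(\prod_{n=1}^{N(t)}\frac{\lambda_{j_n}}{\mu_{j_n}}\right)
\exp\Big\{-(\lambda_0-\mu_0)S_0(t)-(\lambda_1-\mu_1)S_1(t)\Big\},
\]
where $j_n$ is the state occupied during the $n$-th interval and $S_i(t)$ is the total time spent in state $i$ up to $t$ (the last, incomplete interval entering only through $S_i(t)$). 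Since all four rates are strictly positive, $Z_t>0$; and $\EE_\PP\{Z_t\}=1$ is verified by conditioning successively on the interarrival times, so $\QQ$ is a probability measure equivalent to $\PP$ on every $\F_t$. Under $\QQ$ each sojourn in state $i$ is exponential with rate $\lambda_i$, so \eqref{eq:fi} --- equivalently \eqref{eq:DoobMeyer} --- holds, and the martingale theorem preceding Corollary \ref{cor} yields that $X_0$ and $X_1$ are $\QQ$-martingales.

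For uniqueness, let $\QQ'\sim\PP$ be any measure under which $X_0$ and $X_1$ are martingales. Applying Corollary \ref{cor} under $\QQ'$ forces its interarrival densities to satisfy \eqref{eq:fi}, which again reduces to $f_i(t)=\lambda_i\rme^{-\lambda_i t}$ by \eqref{eq:barch}; thus $\QQ'$ induces the same alternating Poisson law on $\{\tau_n\}$ as $\QQ$. As $(X_0,X_1)$ is a fixed measurable functional of the switching flow once the deterministic data $c_i,h_i$ are prescribed, the restriction of $\QQ'$ to the filtration generated by the process is determined by that law, whence $\QQ'=\QQ$.

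The main obstacle, and the only genuinely technical point, is the change-of-measure bookkeeping in the second step: confirming that $Z_t$ is a mean-one $\PP$-martingale and being precise about the mode of equivalence. Here it is worth stressing that, for distinct intensities, $\PP$ and $\QQ$ are mutually singular on $\F_\infty$ (the ergodic ratio $N(t)/t$ separates them), so equivalence can only --- and need only --- be asserted on each finite horizon $\F_t$. This is exactly the notion of equivalent martingale measure at play, and once it is established the existence and uniqueness claimed in the proposition follow directly from Corollary \ref{cor}.
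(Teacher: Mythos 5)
Your construction for existence is, despite the different notation, \emph{identical} to the paper's: the paper's Radon--Nikodym derivative $\mathcal{E}_t\{X^*\}=\exp\{\int_0^t\mathcal T_i(s;c_0^*,c_1^*)\rmd s\}\,\kappa_i^*(t)$ with $c_i^*=\mu_i-\lambda_i$ and $h_i^*=-c_i^*/\mu_i$ unpacks precisely to your $Z_t$, since the integrated telegraph part with constant velocities equals $(\mu_0-\lambda_0)S_0(t)+(\mu_1-\lambda_1)S_1(t)$ and each factor $1+h_i^*$ equals $\lambda_i/\mu_i$. The only difference in the existence step is that the paper delegates the mean-one property and the identification of the new intensities to the Girsanov-type theorems of Ratanov (2007), whereas you verify $\EE_\PP\{Z_t\}=1$ by hand; both are fine. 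Where you genuinely add something is uniqueness: the paper asserts it but offers no argument, while you derive it from Corollary \ref{cor} after the (correct and necessary) observation that $\bar c_i=c_i$ here, so that \eqref{eq:fi} pins down $f_i(t)=\lambda_i\rme^{-\lambda_i t}$ independently of the measure. Your remark that equivalence holds only on each $\F_t$ and not on $\F_\infty$ is also correct and worth making. One caveat you should state explicitly: Corollary \ref{cor} is proved within the model class where the switching flow is an alternating renewal process with independent interarrival times, so your uniqueness argument establishes uniqueness only within the class of equivalent measures preserving that structure; an arbitrary equivalent measure could destroy the independence of the $T_n$, and ruling those out requires a further argument (or an explicit restriction of the admissible class, which is evidently what the paper tacitly intends).
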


\begin{proof} 
According to the Girsanov Theorem, see \citet{R2007}, we apply Radon-Nikodym derivative of the form
\begin{equation}\label{def:RN}
\frac{\rmd \QQ}{\rmd \PP}=\mathcal{E}_t\{X^*\}=\exp\left\{\int_0^t\mathcal T_{i}(s; c_0^*, c_1^*)\rmd s\right\}
\kappa_i^*(t),
\end{equation}
where  $\kappa_i^*(t)=\prod_{k=1}^{N_i(t)}(1+h_{\ep_i(\tau_{k-1})}^*)$ 
is produced by the jump process with constant 
jump amplitudes $h_i^*=-c_i^*/\mu_i$,
and $\int_0^t\mathcal T_{i}(s; c_0^*, c_1^*)\rmd s$ is the integrated telegraph process
with constant velocities 
$c_i^*=\mu_i-\lambda_i$. 
Under the new measure $\QQ$
the underlying Markov flow takes the intensities $\lambda_i,\; i=0, 1$ 
(see Theorem 2 and Theorem 3 by \citet{R2007}).
Therefore, process $X_i(t)$ becomes the martingale. 
\end{proof}

%   \item  
   \emph{Erlang distribution.}
   Telegraph processes with Erlang-distributed interarrival times have been studied by
   \cite{PSZ} and  \cite{DiC2}.
 In our setting, it is easy to see that the martingale distribution 
 can be obtained  by means of
 alternated Erlang distribution for interarrival times, 
  $f_i(t)=\frac{\lambda_i^{n_i}t^{n_i-1}}{(n_i-1)!}\rme^{-\lambda_it}\1_{\{t>0\}},\;$
  $\bar F_i(t)=\sum_{k=0}^{n_i-1}(\lambda_it)^k/k!,\;$
 $ \lambda_i>0,$ $n_i\geq 1,\; i=0, 1$,
   if    the velocities and jumps follow the proportion, see \eqref{barch},
   \begin{equation}\label{erlang-proportion}
   \bar c_i(t)/h_i(t)%=-\frac{f_i(t)}{1-\int_0^tf_i(s)\rmd s}=
  =-\frac{\lambda_i^{n_i}t^{n_i-1}/(n_i-1)!}{\sum_{k=0}^{n_i-1}(\lambda_it)^k/k!}.
   \end{equation}
   
  % Assuming that the market model \eqref{def:S}-\eqref{def:B}  
%   possesses the alternating Erlang distribution  for interarrival times 
   One can  get the martingale measure by
   changing the intensities of the underlying Poisson process
   (see Proposition \ref{prop}).
   
   More precisely, let $(\Omega, \mathcal{F}, \PP)$ be given probability space.
   Consider the Poisson flow $\frak{T}=\{\tau_k\}_{k=0}^\infty$ with constant 
switching intensities $\mu_0, \mu_1>0$. 
Let $\mathcal{G}_t,\; t\geq0$ be a filtration based on this Poisson flow.

We interpret the governing  Erlang flow $\frak{T}^{(n)}(t)$    as thinned Poisson: the system accepts each $n$-th arrived signal. Let $\mathcal{F}_t$ be the 
filtration generated by $\{\frak{T}^{(n)}(s)~:~s\leq t\}$. Clearly, $\mathcal{F}_t\subset\mathcal{G}_t,\; \forall t\geq0$.

All filtrations here are assumed to satisfy the usual hypotheses, see \citet{Protter}.

Changing the measure by means of the Radon-Nikodym derivative
defined by \eqref{def:RN} we pass from intensities $\mu_0,\; \mu_1$ to 
intensities $\lambda_0,\; \lambda_1$ (defined by \eqref{erlang-proportion}) 
for the underlying  Poisson process. Therefore, under the new measure 
the telegraph process with jumps, $X=X(t),\; t\geq0$ is  $\mathcal{G}_t$-martingale. Then
$X=X(t),\; t\geq0$ is  again martingale, for the filtration $\mathcal{F}_t$, see Theorem 2.2, \citet{FP}.

   Another particular possibilities are the following.

\begin{enumerate}
   
\item \emph{Weibull distribution.}
  Assuming that
  \[
\bar c_i(t)/h_i(t)=-\lambda_it^{\alpha_i},\quad \alpha_i>-1,\; \lambda_i>0, \; i=0,1,
  \]
we have $f_i(t)=\lambda_it^{\alpha_i}\exp\left\{-\frac{\lambda_i}{\alpha_i+1}t^{\alpha_i+1}\right\}\1_{t>0}$.

      \item \emph{Pareto distribution.}  Let $0<\lambda_0, \la_1<2.$ For $b_0, b_1>0$
      assume that 
          \[
   \bar c_i(t)/h_i(t)=-\frac{\la_i}{t}\cdot\1_{\{t>b_i\}},\quad i=0, 1.
\]  
   Hence, the martingale distribution is determined by a Pareto distribution for interarrival times,
  i. e. $f_i(t)=\la_i b_i^{\la_i} t^{-1-\la_i} \1_{\{t>b_i\}},\; i=0, 1.$ 

 This distribution is in the domain of normal attraction of some $\la_i$-stable distribution, see \citet{Feller}.

   \item \emph{Logistic distribution.}
   Let interarrival times $T_n,\; n\in\ZZ$ have (alternatied) logistic distributions with the density
   $f_i(t)=\frac{2\lambda_i\rme^{-\lambda_it}}{(1+\rme^{-\lambda_it})^2}\1_{\{t\geq0\}}$,
   see %\cite{Oluy}
   \cite{DiC1}. This produces the martingale distribution, if  %(see \eqref{eq:kappa})
     \[
   \bar c_i(t)/h_i(t)=-\frac{\lambda_i\rme^{-\lambda_it}}{1+\rme^{-\lambda_it}},\quad t\geq0.
   \]
   
   \item
   \emph{Cauchy distribution.} The distribution $f_i$ takes the form of Cauchy, such that
   $f_i(t)=\frac{2a_i/\pi}{a_i^2+t^2}\1_{\{t\geq0\}}$, if %(see \eqref{eq:kappa})
   \[
   \bar c_i(t)/h_i(t)=-\frac{a_i}{(a_i^2+t^2)(\frac{\pi}{2}-\arctan(t/a_i))},
   \quad t\geq0.
   \]

  \item  \emph{Uniform distribution.} Let
    \[
  \bar c_i(t)/h_i(t)=-\frac{1}{A_i-t}\1_{0<t<A_i}.
  \]
  Then, in this blow-up case,
  we have the uniform distribution\textup{,} $f_i(t)=\dfrac{1}{A_i}\1_{0<t<A_i}.$
\end{enumerate}

\bibliographystyle{elsarticle-harv}

\end{document}